\newtheorem{theorem}[equation]{Theorem}
\newtheorem*{maintheorem}{Main Theorem}
\numberwithin{equation}{section}
\theoremstyle{definition}
\newtheorem*{example*}{Example}
\newtheorem{remark}[equation]{Remark}
\newtheorem*{remark*}{Remark}
\newcommand{\bF}{{\mathbb F}}
\newcommand{\bZ}{{\mathbb Z}}
\newcommand{\bP}{{\mathbb P}}
\newcommand{\bO}{{\mathbb O}}
\newcommand{\frg}{{\mathfrak g}}
\newcommand{\frf}{{\mathfrak f}}
\newcommand{\calO}{{\mathcal O}}
\newcommand{\subo}{_{\bar 0}}
\newcommand{\subuno}{_{\bar 1}}
\DeclareMathOperator{\eespan}{span}
\providecommand{\espan}[1]{\eespan\left\{ #1\right\}}
 \newcommand{\tri}{\mathfrak{tri}}
 \newcommand{\frsl}{{\mathfrak{sl}}}
 \newcommand{\frso}{{\mathfrak{so}}}
 \newcommand{\frgl}{{\mathfrak{gl}}}
 \DeclareMathOperator{\ad}{ad}
 \DeclareMathOperator{\Ad}{Ad}
 \DeclareMathOperator{\der}{\mathfrak{der}}
 \DeclareMathOperator{\Aut}{Aut}
  \DeclareMathOperator{\Int}{Int}
 \DeclareMathOperator{\Fix}{Fix}
\def\bigstrut{\vrule height 14pt width 0ptdepth 2pt}
\newenvironment{romanenumerate}
 {\begin{enumerate}
 
 }{\end{enumerate}}
\begin{document}

\title{Jordan gradings on exceptional simple Lie algebras}

\author[Alberto Elduque]{Alberto Elduque$^{\star}$}
 \thanks{$^{\star}$ Supported by the Spanish Ministerio de
 Educaci\'{o}n y Ciencia
 and FEDER (MTM 2007-67884-C04-02) and by the
Diputaci\'on General de Arag\'on (Grupo de Investigaci\'on de
\'Algebra)}
 \address{Departamento de Matem\'aticas e
 Instituto Universitario de Matem\'aticas y Aplicaciones,
 Universidad de Zaragoza, 50009 Zaragoza, Spain}
 \email{elduque@unizar.es}

%\dedicatory{Preliminary version}

\date{October 15, 2008}

\subjclass[2000]{Primary 17B25}

\keywords{Jordan grading, simple, exceptional, Lie algebra, orthogonal decomposition}

\begin{abstract}
Models of all the gradings on the exceptional simple Lie algebras induced by Jordan subgroups of their groups of automorphisms are provided.
\end{abstract}

\maketitle

%%%%%%%%%%%%%%%%%%%%%%%%%%%%%%%%%%%%%%%%%%%%%%%%

\section{Introduction}\label{se:Introduction}

In a recent paper \cite{Eld.GrSym}, some natural gradings on either  octonion algebras or Okubo algebras over fields of characteristic $\ne 2,3$ have been used to construct some nice gradings on the exceptional simple Lie algebras.
Okubo algebras constitute a class of eight dimensional non unital composition algebras. They are then endowed with a nondegenerate quadratic multiplicative form $n$ (so that $n(x*y)=n(x)n(y)$ for any $x,y$), and this form satisfies that the associated polar form $n(x,y)=n(x+y)-n(x)-n(y)$ is associative: $n(x*y,z)=n(x,y*z)$ for any $x,y,z$. These algebras were introduced by S.~Okubo \cite{Oku78} and have some remarkable features (see for instance \cite[Chapter 8]{KMRT}).

More precisely, the following gradings on exceptional simple Lie algebras were obtained in \cite{Eld.GrSym}:

\begin{enumerate}
\item A $\bZ_2^3$-grading on any octonion algebra $\bO$ induces a $\bZ_2^3$-grading on the simple Lie algebras $\frg$ of derivations of  $\bO$ (of type $G_2$), and also a $\bZ_2^3$-grading on the orthogonal simple Lie algebra $\hat\frg$ of the skew symmetric maps relative to the norm of $\bO$ (of type $D_4$), with $\frg_0=0=\hat\frg_0$ and such that $\frg_\alpha$ (respectively $\hat\frg_\alpha$) is a Cartan subalgebra of $\frg$ (resp. $\hat\frg$) for any $0\ne \alpha\in\bZ_2^3$.  (See \cite[Subsection 5.2]{Eld.GrSym}, and note that in many respects $D_4$ is exceptional.)

\item A $\bZ_3^3$-grading on any Okubo algebra $\calO$ induces a $\bZ_3^3$-grading on some attached simple Lie algebras $\frg$ and $\hat\frg$ of types $F_4$ and $E_6$,  with $\frg_0=0=\hat\frg_0$ and such that $\frg_\alpha$ (respectively $\hat\frg_\alpha$ is a two dimensional  subalgebra of $\frg$ (respectively three dimensional subalgebra of $\hat\frg$)  with $\frg_\alpha\oplus\frg_{-\alpha}$ (respectively $\hat\frg_\alpha\oplus \hat\frg_{-\alpha}$) being a Cartan subalgebra of $\frg$ (resp. $\hat\frg$) for any $0\ne \alpha\in\bZ_3^3$. (See
    \cite[Subsection 5.3]{Eld.GrSym}.)

\item A $\bZ_2^3$-grading on each of two octonion algebras induces a $\bZ_2^5$-grading on some attached simple Lie algebra $\frg$ of type $E_8$, with $\frg_0=0$ and such that $\frg_\alpha$ is a Cartan subalgebra of $\frg$ for any $0\ne \alpha\in\bZ_2^5$.  (See \cite[Subsection 5.4]{Eld.GrSym}.)
\end{enumerate}

\medskip

Recall \cite{Alek} that  given a simple Lie algebra $\frg$ and a complex Lie group $G$ with $\Int(\frg)\leq G\leq \Aut(\frg)$ (here $\Int(\frg)$ denotes the group of inner automorphisms and $\Aut(\frg)$ the group of all the automorphisms of $\frg$), an abelian subgroup $A$ of $G$ is a \emph{Jordan subgroup} if:
\begin{romanenumerate}
\item its normalizer $N_G(A)$ is finite,
\item $A$ is a minimal normal subgroup of its normalizer, and
\item its normalizer is maximal among the normalizers of those abelian subgroups satisfying (i) and (ii).
\end{romanenumerate}

The Jordan subgroups are shown in \cite{Alek} to be elementary (that is, isomorphic to $\bZ_p\times\cdots\times\bZ_p$ for some prime number $p$), and hence they induce gradings, called \emph{Jordan gradings}, in the Lie algebra $\frg$.

The classification of Jordan subgroups is given in \cite{Alek} in two tables. Table 1 deals with the classical Lie algebras. Detailed models of the corresponding Jordan gradings are given in \cite[Chapter 3, \S 3.12]{OV}.
On the other hand, Table 2 in \cite{Alek} gives the classification of the Jordan subgroups for the exceptional Lie algebras (see also \cite[Chapter 3, \S 3.13]{OV}). Table \ref{ta:JordanGradings} below summarizes some properties of these Jordan subgroups and of the corresponding Jordan grading. In all of them, the zero homogeneous subspace is trivial.

\begin{table}[h]
\begin{tabular}{|c|c|c|}
\hline
\bigstrut\qquad$\frg$\qquad\null&\qquad $A$\qquad\null &$\dim \frg_\alpha$ ($\alpha\ne 0$)\\[4pt]\hline
\bigstrut$G_2$&$\bZ_2^3$&$2$\\[2pt] \hline
\bigstrut$F_4$&$\bZ_3^3$&$2$\\[2pt] \hline
\bigstrut$E_8$&$\bZ_5^3$&$2$\\[2pt] \hline
\bigstrut$D_4$&$\bZ_2^3$&$4$\\[2pt] \hline
\bigstrut$E_8$&$\bZ_2^5$&$8$\\[2pt] \hline
\bigstrut$E_6$&$\bZ_3^3$&$3$\\[2pt] \hline
\end{tabular}
\caption{{\vrule height 16pt width 0pt}The exceptional Jordan gradings}
\label{ta:JordanGradings}
\end{table}

\smallskip

Therefore, the Jordan gradings on the exceptional simple complex Lie algebras, with the exception of a $\bZ_5^3$-grading on $E_8$, look like the gradings obtained from gradings on an octonion algebra or an Okubo algebra (see \cite[Remark 5.30]{Eld.GrSym}).

The first purpose of this paper is to check that those gradings induced by octonion and Okubo algebras are indeed Jordan gradings.

However, checking that a given subgroup of the automorphism group of a simple Lie algebra is a Jordan subgroup is not an easy task, so a different approach will be followed, which consists in proving the next result, which has its own independent interest:

\begin{maintheorem}
Let $\bF$ be an algebraically closed ground field of characteristic $0$. Then, up to equivalence:
\begin{romanenumerate}
\item There is a unique $\bZ_2^3$-grading on the simple Lie algebra of type $G_2$ over $\bF$ such that $\dim \frg_\alpha=2$ for any $0\ne\alpha\in \bZ_2^3$.

\item There is a unique $\bZ_3^3$-grading on the simple Lie algebra of type $F_4$  over $\bF$ such that $\dim \frg_\alpha=2$ for any $0\ne\alpha\in \bZ_3^3$.

\item There is a unique $\bZ_5^3$-grading on the simple Lie algebra of type $E_8$  over $\bF$ such that $\dim \frg_\alpha=2$ for any $0\ne\alpha\in \bZ_5^3$.

\item There is a unique $\bZ_2^3$-grading on the simple Lie algebra of type $D_4$  over $\bF$ such that $\dim \frg_\alpha=4$ for any $0\ne\alpha\in \bZ_2^3$.

\item There is a unique $\bZ_2^5$-grading on the simple Lie algebra of type $E_8$  over $\bF$ such that $\dim \frg_\alpha=8$ for any $0\ne\alpha\in \bZ_2^5$.

\item There is a unique $\bZ_3^3$-grading on the simple Lie algebra of type $E_6$  over $\bF$ such that $\dim \frg_\alpha=3$ for any $0\ne\alpha\in \bZ_3^3$.
\end{romanenumerate}
\end{maintheorem}

An immediate corollary of the Main Theorem is that indeed the gradings obtained from gradings on octonion or Okubo algebras are Jordan gradings.

\smallskip

Recall that two gradings $\frg=\oplus_{g\in G}\frg_g$ and $\frg=\oplus_{\gamma\in \Gamma}\frg_\gamma$ are said to be \emph{equivalent} if there is an automorphism $\varphi$ of $\frg$ such that for any $g\in G$ with $\frg_g\ne 0$, there is a $\gamma\in \Gamma$ with $\varphi(\frg_g)=\frg_{\gamma}$.

\smallskip

Actually, parts (i), (iv) and (v) of the Main Theorem follow from results by Hesselink. In fact, by \cite[Proposition 3.6]{Hesselink} any grading of a simple Lie algebra $\frg$ over $\bF$, with the properties of the gradings in the Main Theorem, satisfies that for any $0\ne\alpha$ in the grading group (which is $\bZ_p^r$ for $p=2$, $3$ or $5$, and $r=3$ or $5$) one has that the subspace
\begin{equation}\label{eq:g[alpha]}
\frg_{[\alpha]}=\oplus_{i=0}^{p-1}\frg_{i\alpha}
\end{equation}
is always a Cartan subalgebra of $\frg$. Now, by \cite[Theorem 6.2]{Hesselink}, these gradings are unique (up to equivalence) in cases (i), (iv) and (v) of the Main Theorem, where $\frg_\alpha$ is a Cartan subalgebra for any $\alpha\ne 0$. For $E_8$ this has been proved earlier in \cite{Thompson}, where it is shown that there is a unique (up to conjugation by automorphisms) \emph{Dempwolff decomposition} of $E_8$.

Moreover, as mentioned above, the gradings in parts (i), (iv) and (v) are all obtained from the natural $\bZ_2^3$-grading on the algebra of octonions $\bO$ (see \cite{Eld.GrOct} and \cite{Eld.GrSym}). The $\bZ_2^3$-grading on $G_2=\der\bO$ and on $D_4=\frso(\bO)$ are just the gradings induced from the one in $\bO$, while the $\bZ_2^5$-grading on $E_8$ is obtained from the model of $E_8$ as a direct sum of two copies of the triality Lie algebra of the octonions (which is isomorphic to $\frso(\bO)$) and three copies of the tensor product of two copies of the octonions: $E_8=\bigl(\tri(\bO)\oplus\tri(\bO)\bigr)
\oplus\iota_0(\bO\otimes\bO)\oplus\iota_1(\bO\otimes\bO)
\oplus\iota_2(\bO\otimes\bO)$ (see \cite{Eld.GrSym} and the references there in). This model of $E_8$ is naturally $\bZ_2^2$-graded, with the zero homogeneous part given by the direct sum of the two copies of the triality Lie algebra, and the nonzero homogeneous parts given by the three copies of the tensor product of two copies of $\bO$. And this $\bZ_2^2$-grading is now refined by means of the $\bZ_2^3$-grading of $\bO$ to get a $\bZ_2^5$-grading of $E_8$ with the required properties (see \cite[\S 5.4]{Eld.GrSym} for the details).

\smallskip

Therefore, the rest of the paper will be devoted to prove parts (ii), (iii), and (vi) of the Main Theorem. In the process, very concrete models of the corresponding Jordan gradings will emerge.

As a consequence, detailed models of all the Jordan gradings in Table 2 of \cite{Alek} (the exceptional Jordan gradings) are obtained.

\smallskip

To finish this introduction, note that all these gradings are related to the so called \emph{orthogonal decompositions}, introduced in \cite{KKU} (see \cite{KosTiep} and the references there in). For any of these gradings, if we denote by $\bP(\bZ_p^r)$ the projective space of dimension $r-1$ over the finite field $\bZ_p$, and if for $0\ne\alpha\in \bZ_p^r$, $[\alpha]$ denotes the corresponding point in $\bP(\bZ_p^r)$, then the subalgebras $\frg_{[\alpha]}$ in \eqref{eq:g[alpha]} are Cartan subalgebras of $\frg$, and the decomposition
\begin{equation}\label{eq:orthogonaldecomposition}
\frg=\bigoplus_{[\alpha]\in\bP(\bZ_p^r)}\frg_{[\alpha]}
\end{equation}
is a decomposition of $\frg$ into a direct sum of Cartan subalgebras which are orthogonal relative to the Killing form (as the homogeneous subspaces $\frg_\alpha$ and $\frg_\beta$ are always orthogonal unless $\beta=-\alpha$). That is, the decomposition in \eqref{eq:orthogonaldecomposition} is an orthogonal decomposition of $\frg$.

\smallskip

The next section will be devoted to prove the Main Theorem for the $\bZ_5^3$-gradings on $E_8$ (part (iii)), and then Section \ref{se:E6F4} will deal with parts (vi) and (ii).
\bigskip

\section{$\bZ_5^3$-grading on $E_8$}\label{se:E8}

The purpose of this section is to prove part (iii) of the Main Theorem, that is:

\begin{theorem} Let $\bF$ be an algebraically closed field of characteristic $0$ and let $\frg$ be the simple Lie algebra of type $E_8$ over $\bF$. Then up to equivalence there is a unique $\bZ_5^3$-grading of $\frg$ such that $\dim\frg_\alpha=2$ for any $0\ne\alpha\in \bZ_5^3$.
\end{theorem}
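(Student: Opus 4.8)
The plan is to exploit the fact, noted in the introduction, that any grading with the stated properties forces the subalgebra $\frg_{[\alpha]}=\bigoplus_{i=0}^{4}\frg_{i\alpha}$ to be a Cartan subalgebra of $\frg$ for every $0\ne\alpha\in\bZ_5^3$; this is \cite[Proposition 3.6]{Hesselink}. Thus the $31$ subalgebras $\frg_{[\alpha]}$, indexed by $[\alpha]\in\bP(\bZ_5^3)$, give an orthogonal decomposition of $\frg$ into Cartan subalgebras, and each $\frg_{[\alpha]}$ carries the residual $\bZ_5$-grading $\bigoplus_{i}\frg_{i\alpha}$ with one-dimensional pieces for $i\ne 0$ and zero-dimensional piece for $i=0$. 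The first step is therefore to reconstruct the grading group action: for a fixed $\alpha$, the residual $\bZ_5$-grading on the Cartan subalgebra $\frh=\frg_{[\alpha]}$ is implemented by an order-$5$ automorphism $\theta_\alpha$ of $\frg$ that acts on $\frg_{i\alpha}$ as $\zeta^i$ (a primitive fifth root of unity), and since $\frh$ is a Cartan subalgebra, $\theta_\alpha$ lies in the torus $T=\exp(\ad\frh)$; the eigenvalue pattern determines $\theta_\alpha$ as $\exp(2\pi i\,\mathrm{ad}\,h_\alpha/5)$ for a suitable $h_\alpha\in\frh$. Hence the whole $\bZ_5^3$ is realized by a subgroup $A=\langle\theta_{\alpha_1},\theta_{\alpha_2},\theta_{\alpha_3}\rangle$ of $\Aut(\frg)$ isomorphic to $\bZ_5^3$, and the grading is recovered as the simultaneous eigenspace decomposition of $\frg$ under $A$.

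The second step is a combinatorial/root-system analysis. Choose one $\frh=\frg_{[\alpha_0]}$ as the reference Cartan subalgebra, with root system $\Phi$ of type $E_8$; then each root space $\frg_\beta$ ($\beta\in\Phi$), being one-dimensional, lies in a unique homogeneous component $\frg_{\gamma(\beta)}$ of the full $\bZ_5^3$-grading, and $\gamma$ is a $\bZ_5$-valued function on $\Phi$ additive on sums of roots, i.e. the grading is a ``toral'' $\bZ_5^3$-grading coming from a homomorphism $\overline\Phi^\vee\to\bZ_5^3$ (equivalently, an element of the $5$-torsion of the weight lattice modulo root lattice structure — but $E_8$ is self-dual, so this is the $5$-torsion of $\frh^*/\bZ\Phi$ after a suitable quotient). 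The key constraint is that $\frg_0=0$: no root may be sent to $0$, and moreover the $24$ roots mapping into a fixed line $[\alpha]$ must, together with their negatives, span the Cartan $\frg_{[\alpha]}$ and split into four groups of six as $\beta\mapsto i\alpha$, $i=1,2,3,4$. Counting: $|\Phi|=240=31\cdot 8$... but the correct bookkeeping is $240$ roots distributed over $\bP(\bZ_5^3)\setminus\{0\}$ ($124$ lines) together with the $8$-dimensional Cartan distributed as $\mathrm{rk}\,\frg=8$ over the nonzero elements — I will instead argue directly that the function $\gamma:\Phi\to\bZ_5^3$, additive and never zero, with fibers of size $8$ over each nonzero element of a $\bZ_5^3$ that it must surject onto (since the grading has trivial zero part and the $\frg_\alpha$ all have equal dimension $2$), is unique up to the Weyl group $W(E_8)$ and automorphisms of $\bZ_5^3$. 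Concretely, such a $\gamma$ corresponds to an embedding $\bZ_5^3\hookrightarrow T$ (an order-$5$ ``abelian'' subgroup acting on $\frg$ with the Sylow properties of a Jordan subgroup); one then invokes the classification of such embeddings, or proves directly that the abelian subgroup $\bZ_5^3$ of $E_8(\bF)$ with centralizer equal to itself (which is forced by $\frg_0=0$) is unique up to conjugacy.

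The third and cleanest route, which I expect to be the one actually taken, is to build the grading explicitly from the known model and then prove uniqueness by a rigidity argument: start from the construction via Okubo algebras (or via a sub-$\bZ_5^3$ of the maximal torus normalized appropriately), verify it has the stated properties, and then show any other such grading is equivalent by transporting reference Cartan subalgebras via an automorphism (all Cartan subalgebras of $\frg$ are conjugate) and then matching up the residual data using $W(E_8)$-orbit considerations on order-$5$ elements. The main obstacle is precisely this last uniqueness step: unlike cases (i), (iv), (v), here $\frg_\alpha$ is \emph{not} a Cartan subalgebra (it is only two-dimensional), so Hesselink's Theorem 6.2 does not directly apply, and one must instead control the finer combinatorial structure of how the $240$ root spaces, relative to the Cartan $\frg_{[\alpha_0]}$, distribute among the homogeneous components — showing that the resulting weight function on $\Phi$ is rigid under $W(E_8)\rtimes\Aut(\bZ_5^3)$. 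This reduces to showing the abelian group $A\cong\bZ_5^3$ of \emph{order-$5$} automorphisms is unique up to conjugacy among those with finite, self-normalizing-up-to-$A$ centralizer, which is the content of \cite{Alek}; I would either cite this or reprove the needed uniqueness by the explicit model-plus-rigidity method, deferring the detailed root-space bookkeeping to the body of the proof.
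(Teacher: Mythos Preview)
Your outline has a genuine error and several unresolved gaps. The claim that ``since $\frh$ is a Cartan subalgebra, $\theta_\alpha$ lies in the torus $T=\exp(\ad\frh)$'' is false: an element of that torus acts \emph{trivially} on $\frh$, whereas your $\theta_\alpha$ is supposed to act on $\frh=\frg_{[\alpha]}$ with eigenvalue $\zeta^i$ on $\frg_{i\alpha}$. What is true is that the subgroup $\alpha^\perp\cong\bZ_5^2$ of the dual group fixes $\frh$ pointwise and hence lies in the corresponding torus, so only a $\bZ_5^2$-quotient of the grading is ``toral'' relative to $\frh$; the third generator moves $\frh$. This undermines your attempt to encode the full grading as an additive $\bZ_5^3$-valued function on $\Phi$: you only get a $\bZ_5^2$-valued one this way, and the remaining $\bZ_5$ is governed by an element of the Weyl group (or rather the normalizer of the torus), not by a cocharacter. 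Your root-counting paragraph also never stabilizes ($|\bP(\bZ_5^3)|=31$, not $124$; the $240$ root spaces fill the $30$ Cartan subalgebras $\frg_{[\alpha]}$ with $\alpha\notin[\alpha_0]$), and the ``third route'' either invokes Alekseevski\u{\i}'s classification, which is circular given the aim of the paper, or defers everything to unspecified bookkeeping.

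The paper's proof is entirely different and much more concrete. It starts from the $\bZ_5$-graded model of $E_8$ with $\frg_{\bar 0}=\frsl(V_1)\oplus\frsl(V_2)$, $\dim V_i=5$, and the remaining $\frg_{\bar\imath}$ built from tensor products of exterior powers of $V_1,V_2$. Kac's classification of finite-order automorphisms via the affine diagram $E_8^{(1)}$ shows there is a \emph{unique} conjugacy class of order-$5$ automorphisms with $48$-dimensional fixed subalgebra (the node with label $5$), so the first generator $\sigma_1$ may be normalized to the automorphism giving this $\bZ_5$-grading. The remaining generators $\sigma_2,\sigma_3$ commute with $\sigma_1$, preserve each $\frsl(V_i)$, are inner there (since they have odd order), and are therefore of the form $\Ad_{b_i}$, $\Ad_{c_i}$ on $\frsl(V_i)$, lifting to $b_1\otimes\wedge^2 b_2$ etc.\ on $\frg_{\bar 1}$. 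The dimension constraints $\dim\Fix(\sigma_1,\sigma_2)=8$ and $\frg_0=0$ force $b_i,c_i\in SL(V_i)$ to have five distinct fifth-root eigenvalues and to satisfy Heisenberg relations $b_ic_i=\mu_ic_ib_i$ with $\mu_i\ne 1$, which pins everything down to explicit $5\times 5$ clock-and-shift matrices. No root-system combinatorics or appeal to orthogonal decompositions is needed; the argument is pure linear algebra inside $GL_5\times GL_5$ once Kac's theorem fixes $\sigma_1$.
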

\begin{proof}
First note that by dimension count $\frg_0=0$ holds. The proof will follow several steps.

\bigskip

\noindent\textbf{Step 1:}\quad The construction of a suitable model of the simple Lie algebra of type $E_8$.

Let $V_1$ and $V_2$ be two vector spaces over $\bF$ of dimension $5$, and consider the $\bZ_5$-graded vector space
\begin{equation}\label{eq:E85graded}
\frg=\oplus_{i=0}^4\frg_{\bar\imath},
\end{equation}
where
\begin{equation}\label{eq:E8components}
\begin{split}
\frg\subo&=\frsl(V_1)\oplus \frsl(V_2),\\
\frg\subuno&=V_1\otimes \textstyle{\bigwedge^2}V_2,\\
\frg_{\bar 2}&=\textstyle{\bigwedge^2}V_1\otimes \textstyle{\bigwedge^4}V_2,\\
\frg_{\bar 3}&=\textstyle{\bigwedge^3}V_1\otimes V_2,\\
\frg_{\bar 4}&=\textstyle{\bigwedge^4}V_1\otimes \textstyle{\bigwedge^3}V_2.
\end{split}
\end{equation}
(All the tensor products are considered over the ground field $\bF$.)
This is a $\bZ_5$-graded Lie algebra, with the natural action of the semisimple algebra $\frg\subo$ on each of the other homogeneous components, and the brackets between elements in different components are given by  suitable scalar multiples of the only $\frg\subo$-invariant possibilities. In this way, $\frg$ is the exceptional simple Lie algebra of type $E_8$. The details of the Lie multiplication have been computed in \cite{CristinaE8}. This decomposition has received some attention lately \cite{KostantE8}.

\bigskip

\noindent\textbf{Step 2:}\quad Up to conjugation in $\Aut\frg$, there is a unique order $5$ automorphism of the simple Lie algebra $\frg$ of type $E_8$ such that the dimension of the subalgebra of fixed elements is $48$.

Actually, as shown in \cite[\S 8.6]{Kac}, up to conjugation, the finite order automorphisms of $E_8$ are in one to one correspondence with subsets of nodes of the affine Dynkin diagram $E_8^{(1)}$
\begin{equation}\label{eq:E81}
\begin{gathered}
\begin{picture}(160,50)(0,-30)
 \multiput(10,0)(20,0){8}{\circle{3}}
 \put(90,0){\circle*{3}}
 \put(110,-20){\circle{3}}
 \multiput(12,0)(20,0){7}{\line(1,0){16}}
 \put(110,-18){\line(0,1){16}}
 \put(10,8){\makebox(0,0){$1$}}
 \put(30,8){\makebox(0,0){$2$}}
 \put(50,8){\makebox(0,0){$3$}}
 \put(70,8){\makebox(0,0){$4$}}
 \put(90,8){\makebox(0,0){$5$}}
 \put(110,8){\makebox(0,0){$6$}}
 \put(130,8){\makebox(0,0){$4$}}
 \put(150,8){\makebox(0,0){$2$}}
 \put(118,-20){\makebox(0,0){$3$}}
\end{picture}
\end{gathered}
\end{equation}
such that the sum of the integers that label the nodes in the subset is exactly $5$. Given such a subset of, say, $r$ nodes, the fixed subalgebra is the direct sum of the semisimple Lie algebra whose Dynkin diagram is the one obtained by removing in the diagram \eqref{eq:E81} the nodes in the subset, and a center of dimension $r-1$. Now it is easy to see that the only possibility is the automorphism $\sigma$ obtained when considering the subset that consists exactly of the node with label $5$. In this case, one gets a $\bZ_5$-grading of $\frg$ where $\frg\subo$ is a direct sum of two copies of the simple Lie algebra of type $A_4$. The uniqueness show us that, up to conjugation, $\sigma$ is the automorphism of $\frg$ such that its restriction to $\frg_{\bar\imath}$ (notation as in Step 1) is $\xi^i$ times the identity, where $\xi$ is a fixed primitive fifth root of unity.

\bigskip

\noindent\textbf{Step 3:}\quad Assume that $\frg=\oplus_{0\ne \alpha\in \bZ_5^3}\frg_\alpha$ is a $\bZ_5^3$-graded simple Lie algebra of type $E_8$ with $\dim\frg_\alpha=2$ for any $0\ne\alpha\in\bZ_5^3$. The homogeneous spaces are given by the common eigenspaces of three commuting order $5$ automorphisms $\sigma_1$, $\sigma_2$, and $\sigma_3$, of $\frg$ which generate a subgroup of $\Aut\frg$ isomorphic to $\bZ_5^3$.

\smallskip

\noindent\textbf{3.1 ($\sigma_1$):}\quad Step 2 shows us that, without loss of generality, we may assume that $\sigma_1$ is the automorphism such that $\sigma_1(x)=\xi^ix$ for any  $x\in \frg_{\bar\imath}$ (notation as in Step 1).

\smallskip

\noindent\textbf{3.2 ($\sigma_2$):}\quad Consider now the order $5$ automorphism $\sigma_2$. As it commutes with $\sigma_1$, the restriction $\sigma_2\vert_{\frg\subo}$ is an automorphism of $\frg\subo$. Its order is then either $1$ or $5$. Given a subset of automorphisms of $\frg$, let us denote by $\Fix(S)$ the subset of elements fixed by all the elements in $S$. Note that
\[
\Fix(\sigma_2\vert_{\frg\subo})=\Fix(\sigma_1,\sigma_2)=\oplus_{i=0}^4\frg_{i\alpha}
\]
for some $0\ne\alpha\in\bZ_5^3$, and this subspace has dimension $8$. We conclude that $\sigma_2\vert_{\frg\subo}$ has order $5$. Since $\frsl(V_1)$ and $\frsl(V_2)$ are the only ideals of $\frg\subo$ and $\sigma_2$ induces a permutation of these two ideals of order $1$ or $5$, it follows that both $\frsl(V_1)$ and $\frsl(V_2)$ are invariant under the action of $\sigma_2$, and since $\dim\Fix(\sigma_1,\sigma_2)$ is $8$, it turns out that the restriction of $\sigma_2$ to $\frsl(V_i)$ has order $5$ ($i=1,2$).

Recall \cite[Chapter IX]{Jacobson} that $\Int(\frsl(V_i)$ is the group generated by the set $\{\exp\ad_a: a\in\frsl(V_i),\ a\textrm{\ nilpotent}\}$, and that the quotient $\Aut(\frsl(V_i))/\Int(\frsl(V_i))$ is a cyclic group of order $2$. Since the order of the restriction $\sigma_2\vert_{\frsl(V_i)}$ is $5$, this restriction belongs to $\Int(\frsl(V_i))$, $i=1,2$.

Therefore, there are nilpotent endomorphisms $a_{ij}\in \frsl(V_i)$, $j=1,\ldots,m_i$, $i=1,2$, such that
\[
\sigma_2\vert_{\frsl(V_i)}=\exp\ad_{a_{i1}}\cdots\exp\ad_{a_{im_i}}.
\]
Hence, the restriction $\sigma_2\vert_{\frg\subo}$ extends to the automorphism $\hat\sigma_2$ of $\frg$ given by the formula:
\[
\hat\sigma_2=\exp\ad_{11}\cdots\exp\ad_{1m_1}\exp\ad_{21}\cdots\exp\ad_{2m_2}.
\]
Note that $\hat\sigma_2$ leaves invariant the subspaces $\frg_{\bar\imath}$, for $0\leq i\leq 4$. Thus the automorphism $\hat\sigma_2^{-1}\sigma_2$ leaves invariant all the subspaces $\frg_{\bar\imath}$ and its restriction to $\frg\subo$ is the identity. But each $\frg_{\bar 1}$  is an irreducible module for $\frg\subo$, so Schur's Lemma shows that there is a nonzero scalar $\lambda\in\bF$ such that
\[
\hat\sigma_2^{-1}\sigma_2\vert_{\frg\subuno}=\lambda 1,
\]
and, as $\frg\subuno$ generates $\frg$ as a Lie algebra, it follows that the restriction of $\hat\sigma_2^{-1}\sigma_2$ to $\frg_{\bar\imath}$ is $\lambda^i$ times the identity map, and that $\lambda^5=1$. Also note that given any endomorphism $a\in\frsl(V_i)$, $\exp\ad_a=\Ad_{\exp a}$ on $\frsl(V_i)$ ($\Ad_g(x)=gxg^{-1}$ for any $g\in GL(V_i)$ and $a\in \frsl(V_i)$), while $\ad_a$ acts on each $\bigwedge^jV_i$ in the natural way, so that $\exp\ad_a$ acts on $\bigwedge^jV_i$ as $\wedge^j\exp a$ (where $(\wedge^jf)(w_1\wedge\cdots\wedge w_j)=f(w_1)\wedge\cdots\wedge f(w_j)$).

Consider the elements $b_{ij}=\exp a_{ij}\in SL(V_i)$, and $b_i=b_{i1}\cdots b_{im_i}$. Then the restrictions of $\hat\sigma_2$ to $\frsl(V_i)$ ($i=1,2$) and $\frg\subuno=V_1\otimes \bigwedge^2V_2$ are, respectively, the automorphism $\Ad_{b_i}$ and the linear isomorphism $b_1\otimes\wedge^2 b_2$. If $b_1$ is changed to $\lambda b_1$, then we get the new automorphism $\tilde\sigma_2$ such that $\tilde\sigma_2\vert_{\frg\subo}=\hat\sigma_2\vert_{\frg\subo}=\sigma_2\vert_{\frg\subo}$ and $\tilde\sigma_2\vert_{\frg\subuno}=\lambda\hat\sigma_2\vert_{\frg\subuno}=\sigma_2\vert_{\frg\subuno}$.
It follows that $\tilde\sigma_2=\sigma_2$ (recall that $\frg\subuno$ generates $\frg$).

Summarizing the previous arguments, it has been proven that there are elements $b_i\in SL(V_i)$, $i=1,2$ such that
\begin{equation}\label{eq:b1b2}
\sigma_2\vert_{\frsl(V_i)}=\Ad_{b_i}\quad (i=1,2),\qquad \sigma_2\vert_{\frg\subuno}=b_1\otimes\wedge^2b_2.
\end{equation}
Moreover, the order of $\sigma_2$ is $5$, so $(\sigma_2\vert_{\frg\subo})^5=1$, which implies that $b_i^5=\lambda_i1_{V_i}$ for some $0\ne \lambda_i\in\bF$, $i=1,2$. But also $(\sigma_2\vert_{\frg\subuno})^5=1$, whence $\lambda_1\lambda_2^2=1$. Since $\bF$ is algebraically closed, we can take scalars $\mu_1,\mu_2\in \bF$ such that $\mu_1^5=\lambda_1^{-1}$, $\mu_2^5=\lambda_2^{-1}$ and $\mu_1\mu_2^2=1$. We may substitute $b_i$ by $\mu_ib_i$, $i=1,2$ in \eqref{eq:b1b2}, and hence we may assume that $b_i^5=1_{V_i}$, $i=1,2$.

Besides, for $i=1,2$, since $b_i^5=1$, $b_i$ is a diagonalizable endomorphism of $V_i$ whose eigenvalues are fifth roots of unity. Note that the subspace $\{x\in \frsl(V_i): b_ixb_i^{-1}=x\}$ has dimension at least $4$, because the endomorphisms which act diagonally on a basis of eigenvectors of $b_i$ commute with $b_i$. But if an eigenvalue of $b_i$ has multiplicity $\geq 2$, then the dimension above is strictly greater that $4$, and this contradicts the dimension of $\Fix(\sigma_1,\sigma_2)=\Fix(\sigma_2\vert_{\frg\subo})$ being exactly $8$. Therefore, all the eigenvalues of $b_i$ have multiplicity $1$, and therefore a basis $\{v_{i1},\ldots,v_{i5}\}$ of $V_i$ can be taken with $b(v_{ij})=\xi^jv_{ij}$ ($i=1,2$, $j=1,2,3,4,5$). That is, the matrix of $b_i$ in this basis is precisely
\[
\begin{pmatrix} 1&0&0&0&0\\ 0&\xi&0&0&0\\ 0&0&\xi^2&0&0\\
0&0&0&\xi^3&0\\ 0&0&0&0&\xi^4
\end{pmatrix}.
\]

\smallskip

\noindent\textbf{3.3 ($\sigma_3$):}\quad Finally, let us consider the automorphism $\sigma_3$. With the same arguments used in \textbf{3.2}, elements $c_i\in SL(V_i)$ ($i=1,2$) can be found with $c_i^5=1$, no repeated eigenvalues, and such that
\[
\sigma_3\vert_{\frg\frsl(V_i)}=\Ad_{c_i},\qquad
\sigma_3\vert_{\frg\subuno}=c_1\otimes\wedge^2c_2.
\]
As $\sigma_2$ and $\sigma_3$ commute, it follows in particular that $\Ad_{b_i}\Ad_{c_i}=\Ad_{c_i}\Ad_{b_i}$ in $\frsl(V_i)$, or $b_ic_i=\mu_ic_ib_i$ for some $0\ne \mu_i\in \bF$. Since $b_i^5=1$, we have $\mu_i^5=1$, $i=1,2$.

But if $\mu_i$ were equal to $1$, then $c_i$ would belong to $\{x\in\frgl(V_i): xb_i=b_ix\}=\espan{b_i^j: j=0,\ldots,4}$, so the subspace $\{x\in\frsl(V_i): \sigma_2(x)=\sigma_3(x)=x\}=\{x\in\frsl(V_i):xb_i=b_ix\}$ would have dimension $4$, while we have
\[
\{x\in\frsl(V_i):\sigma_2(x)=\sigma_3(x)=x\}\subseteq \Fix(\sigma_1,\sigma_2,\sigma_3)=\frg_0=0,
\]
a contradiction. Therefore, $\mu_i\ne 1$, $i=1,2$.

We may change $\sigma_3$ by $\sigma_3^j$ for $1\leq j\leq 4$, which implies changing $c_i$ by the corresponding power), and in this way we may assume that $\mu_1=\xi$, the fixed primitive fifth root of unity we have been using so far. (Note that the grading induced by $\sigma_1,\sigma_2,\sigma_3$ is induced too by $\sigma_1,\sigma_2$ and $\sigma_3^j$.)

Moreover, the commutativity of $\sigma_2$ and $\sigma_3$ on $\frg\subuno$ gives:
\[
b_1c_1\otimes\wedge^2(b_2c_2)=c_1b_1\otimes\wedge^2(c_2b_2)= \mu_1\mu_2^2b_1c_1\otimes\wedge^2(b_2c_2),
\]
so that $\mu_1\mu_2^2=1$ and thus we may assume that $\mu_1=\xi$ and $\mu_2=\xi^2$.

Since $b_1c_1=\xi c_1b_1$, $b_1c_1^j(v_{1j})=\xi^jc_1(v_{1j})$ ($j=1,\ldots,5$), and hence we may scale the basic vectors $v_{1j}$ so that $c(v_{1j})=v_{1(j+1)}$ for $j=1,2,3,4$. In other words, a basis can be taken in $V_1$ such that the coordinate matrices of $b_1$ and $c_1$ are:
\begin{equation}\label{eq:b1c1}
b_1\leftrightarrow
\begin{pmatrix} 1&0&0&0&0\\ 0&\xi&0&0&0\\ 0&0&\xi^2&0&0\\
0&0&0&\xi^3&0\\ 0&0&0&0&\xi^4
\end{pmatrix},\qquad
c_1\leftrightarrow
\begin{pmatrix} 0&0&0&0&1\\ 1&0&0&0&0\\ 0&1&0&0&0\\
0&0&1&0&0\\ 0&0&0&1&0
\end{pmatrix}.
\end{equation}
In the same vein, since $b_2c_2=\xi^2c_2b_2$, permuting and scaling the previous basis on $V_2$, a new basis can be taken in $V_2$ such that the coordinate matrices of $b_2$ and $c_2$ are:
\begin{equation}\label{eq:b2c2}
b_2\leftrightarrow
\begin{pmatrix} 1&0&0&0&0\\ 0&\xi^2&0&0&0\\ 0&0&\xi^4&0&0\\
0&0&0&\xi&0\\ 0&0&0&0&\xi^3
\end{pmatrix},\qquad
c_2\leftrightarrow
\begin{pmatrix} 0&0&0&0&1\\ 1&0&0&0&0\\ 0&1&0&0&0\\
0&0&1&0&0\\ 0&0&0&1&0
\end{pmatrix}.
\end{equation}

\bigskip
In conclusion, up to equivalence, the only $\bZ_5^3$-grading of $\frg$ such that $\dim\frg_\alpha=2$ for any $0\ne\alpha\in\bZ_5^3$ is given by the automorphisms $\sigma_1,\sigma_2,\sigma_3$ such that
\begin{equation}\label{eq:sigmas}
\begin{split}
&\sigma_1(x)=\xi^ix\quad\textrm{for any $x\in \frg_{\bar\imath}$ and $0\leq i\leq 4$,}\\
&\sigma_2\vert_{\frg\subuno}=b_1\otimes\wedge^2b_2,\\
&\sigma_3\vert_{\frg\subuno}=c_1\otimes\wedge^2c_2,
\end{split}
\end{equation}
where the $\frg_{\bar\imath}$'s are the homogeneous components in \eqref{eq:E8components}, and on  fixed bases of $V_1$ and $V_2$, $b_1$ and $c_1$ (respectively $b_2$ and $c_2$) are the endomorphisms of $V_1$ (respectively $V_2$) in \eqref{eq:b1c1} (respectively \eqref{eq:b2c2}).
\end{proof}

\medskip

\begin{remark}
The proof of the previous Theorem gives a precise model for the $\bZ_5^3$ Jordan grading of $E_8$.
\end{remark}

\bigskip

\section{$\bZ_3^3$-gradings on $E_6$ and $F_4$}\label{se:E6F4}

In this section parts (ii) and (vi) of the Main Theorem will be proved. Many arguments are quite similar to the ones used for $E_8$, so they will just be sketched.

We start with $E_6$:

\begin{theorem}\label{th:E6} Let $\bF$ be an algebraically closed field of characteristic $0$ and let $\frg$ be the simple Lie algebra of type $E_6$ over $\bF$. Then up to equivalence there is a unique $\bZ_3^3$-grading of $\frg$ such that $\dim\frg_\alpha=3$ for any $0\ne\alpha\in \bZ_3^3$.
\end{theorem}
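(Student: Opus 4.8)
The plan is to mimic the $E_8$ argument step by step, replacing the $\bZ_5$-grading tools with the analogous $\bZ_3$-grading picture for $E_6$.

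\smallskip

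\textbf{Step 1 (a model of $E_6$).} First I would produce a convenient $\bZ_3$-graded model of the simple Lie algebra $\frg$ of type $E_6$ whose degree-zero part $\frg\subo$ is (semi)simple of rank $6$ and splits as a sum of two $A_2$'s together with a one-dimensional center; the natural candidate is
\[
\frg\subo=\frsl(V_1)\oplus\frsl(V_2)\oplus\frsl(V_3),\qquad
\frg\subuno=V_1\otimes V_2\otimes V_3,\qquad
\frg_{\bar 2}=\textstyle\bigwedge^2 V_1\otimes\bigwedge^2 V_2\otimes\bigwedge^2 V_3,
\]
with $\dim V_i=3$, the brackets between distinct components being suitable scalar multiples of the unique $\frg\subo$-invariant maps. (This is the well-known $\bZ_3$-grading of $E_6$ with $\frg\subo\cong\frsl_3^{\oplus 3}$; dimension count gives $24+27+27=78$.) Wait --- this model is actually the one with three factors, which is adapted to the $\bZ_3^3$ structure; I would record the precise Lie multiplication by citing the analogue of \cite{CristinaE8}, or simply the standard description of this grading.

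\smallskip

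\textbf{Step 2 (the first automorphism).} Using Kac's classification of finite order automorphisms via subsets of nodes of the affine diagram $E_6^{(1)}$ with node-labels summing to $3$, I would identify the order-$3$ automorphism $\sigma_1$ producing exactly the above grading and check (by the same Kac recipe: remove the chosen node, read off the semisimple part plus a center of dimension $r-1$) that this is, up to conjugacy, the unique order-$3$ automorphism of $E_6$ whose fixed subalgebra has the right dimension — namely $\dim\Fix(\sigma_1)=\dim\frg\subo=24$, which should coincide with $\dim\Fix(\sigma_1,\sigma_2)\cdot 3 = 8\cdot 3$? No: here $\dim\frg_\alpha=3$, so $\dim\Fix(\sigma_1,\sigma_2)=\oplus_{i}\frg_{i\alpha}$ has dimension $9$, and $\dim\frg\subo=\dim\Fix(\sigma_1)$ should be $3\cdot 9=27$ minus the adjustment for the center — I would pin down the exact number from Table \ref{ta:JordanGradings} and the grading. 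Assuming a $\bZ_3^3$-grading with $\dim\frg_\alpha=3$ is given, its homogeneous spaces are the common eigenspaces of three commuting order-$3$ automorphisms $\sigma_1,\sigma_2,\sigma_3$ generating a $\bZ_3^3\le\Aut\frg$, and we may conjugate so that $\sigma_1$ acts on $\frg_{\bar\imath}$ as $\omega^i$ for a fixed primitive cube root of unity $\omega$.

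\smallskip

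\textbf{Step 3 ($\sigma_2$ and $\sigma_3$).} Exactly as in \textbf{3.2}--\textbf{3.3} for $E_8$: $\sigma_2$ commutes with $\sigma_1$, hence preserves $\frg\subo$; since $\dim\Fix(\sigma_1,\sigma_2)$ is too small for $\sigma_2\vert_{\frg\subo}$ to be the identity, and $\sigma_2$ permutes the simple ideals $\frsl(V_i)$, order considerations force each $\frsl(V_i)$ to be invariant with $\sigma_2\vert_{\frsl(V_i)}$ of order $3$; as $\Aut(\frsl(V_i))/\Int(\frsl(V_i))\cong\bZ_2$ and $3$ is odd, the restriction is inner, hence $\sigma_2\vert_{\frsl(V_i)}=\Ad_{b_i}$ for some $b_i\in SL(V_i)$, and --- using that $\frg\subuno$ is an irreducible $\frg\subo$-module that generates $\frg$, plus Schur --- $\sigma_2$ itself is realized on $\frg\subuno=V_1\otimes V_2\otimes V_3$ by $b_1\otimes b_2\otimes b_3$ after rescaling the $b_i$ so that $b_i^3=1_{V_i}$ and $\det$-compatibility holds. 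The condition $\dim\Fix(\sigma_1,\sigma_2)=9$ (three dimensions from each $\frsl(V_i)$) forces each $b_i$ to have three distinct eigenvalues, i.e. $b_i=\diag(1,\omega,\omega^2)$ in a suitable basis. Repeating for $\sigma_3$ gives $c_i\in SL(V_i)$ with $c_i^3=1$, $\Ad_{c_i}=\sigma_3\vert_{\frsl(V_i)}$, and $b_ic_i=\mu_i c_ib_i$ with $\mu_i^3=1$; the triviality $\mu_i=1$ is excluded because $\Fix(\sigma_1,\sigma_2,\sigma_3)=\frg_0=0$ would be violated, so $\mu_i\ne 1$, and after replacing $\sigma_3$ by a power we may take $\mu_1=\omega$; the commutativity on $\frg\subuno$ then forces $\mu_1\mu_2\mu_3=1$, fixing $(\mu_1,\mu_2,\mu_3)$ up to symmetry. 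As in the $E_8$ case this yields, in adapted bases of $V_1,V_2,V_3$, the explicit $3\times 3$ clock-and-shift matrices for $b_i$ and $c_i$, which determines $\sigma_1,\sigma_2,\sigma_3$ (and hence the grading) uniquely up to equivalence. Finally the converse: one must check that the grading so produced actually has all $\dim\frg_\alpha=3$ for $\alpha\ne 0$, which follows because the triple $(b_i,c_i)$ acts on $V_i$ as the Heisenberg/Weyl pair, so $V_1\otimes V_2\otimes V_3$ and $\bigwedge^2V_1\otimes\bigwedge^2V_2\otimes\bigwedge^2V_3$ decompose into nine (resp. distributed) one-dimensional weight spaces, and together with the weight spaces in $\frg\subo$ the count works out to $3$ per nonzero character of $\bZ_3^3$.

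\smallskip

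\textbf{Main obstacle.} The delicate point, just as for $E_8$, is the passage from "$\sigma_2\vert_{\frg\subo}$ is inner of order $3$ on each $\frsl(V_i)$" to "$\sigma_2$ is globally realized by $b_1\otimes b_2\otimes b_3$ on $\frg\subuno$": one extends $\sigma_2\vert_{\frg\subo}$ to an inner $\hat\sigma_2$ of $\frg$, uses irreducibility of $\frg\subuno$ and Schur to see $\hat\sigma_2^{-1}\sigma_2$ is a scalar on $\frg\subuno$, absorbs that scalar into a rescaling of one of the $b_i$, and only then concludes $\hat\sigma_2=\sigma_2$ since $\frg\subuno$ generates $\frg$. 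Additional bookkeeping specific to $E_6$ is that $\frg\subo$ has a one-dimensional center and three (rather than two) simple ideals, so the determinant/central-character constraints ($b_i^3=\lambda_i 1$, product of the $\lambda_i$ equal to $1$, and similarly for the $\mu_i$) must be tracked carefully; but none of this is essentially harder than the $E_8$ computation, which is why the authors say these arguments "will just be sketched."
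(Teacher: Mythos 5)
Your strategy is exactly the paper's: the same $\bZ_3$-graded model (your $\bigwedge^2V_i$ factors are canonically $V_i^*$ as $\frsl(V_i)$-modules, so your Step 1 coincides with the paper's \eqref{eq:E6components}), Kac's recipe on $E_6^{(1)}$ to normalize $\sigma_1$, then inner automorphisms plus Schur plus the fact that $\frg\subuno$ generates $\frg$ to write $\sigma_2,\sigma_3$ as $b_1\otimes b_2\otimes b_3$ and $c_1\otimes c_2\otimes c_3$, ending with the clock-and-shift normal form \eqref{eq:biciE6}. However, there is one concrete gap, and it is caused by your miscount of the key dimension: in such a grading $\Fix(\sigma_1,\sigma_2)=\frg_0\oplus\frg_\alpha\oplus\frg_{2\alpha}$ has dimension $0+3+3=6$, not $9$ (likewise $\dim\Fix(\sigma_1)=8\cdot 3=24$, and $\frg\subo\cong\frsl_3\oplus\frsl_3\oplus\frsl_3$ is semisimple with no one-dimensional center: $24+27+27=78$).

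The number $6$ does real work at precisely the point you dismiss with ``order considerations force each $\frsl(V_i)$ to be invariant.'' Unlike $E_8$, where $\frg\subo$ has only two simple ideals and an order-$5$ permutation of them must be trivial, here $\sigma_2$ could a priori permute the three ideals cyclically, which is perfectly compatible with $\sigma_2$ having order $3$. The paper excludes this because the fixed subalgebra of such a $\sigma_2\vert_{\frg\subo}$ contains the $8$-dimensional diagonal copy $\{x+\sigma_2(x)+\sigma_2^2(x):x\in\frsl(V_1)\}$, which cannot fit inside the $6$-dimensional $\Fix(\sigma_1,\sigma_2)$; with your figure of $9$ this contradiction evaporates, so as written the invariance of the $\frsl(V_i)$ is unproved. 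The same correct count $6=2+2+2$ (the centralizer of a regular semisimple $b_i$ in $\frsl(V_i)$ is $2$-dimensional, not $3$-dimensional as you state) is what forces each $b_i$ to have three distinct eigenvalues. Once these numbers are repaired, the rest of your argument goes through exactly as in the paper.
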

\begin{proof}
The same steps as for $E_8$ will be followed.

\bigskip

\noindent\textbf{Step 1:}\quad The construction of a suitable model of the simple Lie algebra of type $E_6$.

Here the model appears in \cite[Chapter 13]{Adams} (see also \cite[\S 3]{DraperModelsF4}). Let $V_1$, $V_2$ and $V_3$ be three vector spaces of dimension $3$ over $\bF$ and consider the $\bZ_3$-graded Lie algebra
\begin{equation}\label{eq:E63graded}
\frg=\frg\subo\oplus\frg\subuno\oplus\frg_{\bar 2},
\end{equation}
where
\begin{equation}\label{eq:E6components}
\begin{split}
\frg\subo&=\frsl(V_1)\oplus \frsl(V_2)\oplus\frsl(V_3),\\
\frg\subuno&=V_1\otimes V_2\otimes V_3,\\
\frg_{\bar 2}&=V_1^*\otimes V_2^*\otimes V_3^*.
\end{split}
\end{equation}
This is a $\bZ_3$-graded Lie algebra, with the natural action of the semisimple algebra $\frg\subo$ on each of the other homogeneous components, and the brackets between elements in different components are given by suitable scalar multiples of the only $\frg\subo$-invariant possibilities. In this way, $\frg$ is the exceptional simple Lie algebra of type $E_6$.

\bigskip

\noindent\textbf{Step 2:}\quad Up to conjugation in $\Aut\frg$, there is a unique order $3$ automorphism of the simple Lie algebra $\frg$ of type $E_6$ such that the dimension of the subalgebra of fixed elements is $24$.

Actually, this automorphism $\sigma$ is the one that corresponds to the only node labeled by $3$ in of the affine Dynkin diagram $E_6^{(1)}$:
\[
\begin{picture}(100,70)(0,-50)
 \multiput(10,0)(20,0){5}{\circle{3}}
 \put(50,0){\circle*{3}}
 \multiput(50,-20)(0,-20){2}{\circle{3}}
 \multiput(12,0)(20,0){4}{\line(1,0){16}}
 \multiput(50,-38)(0,20){2}{\line(0,1){16}}
 \put(10,8){\makebox(0,0){$1$}}
 \put(30,8){\makebox(0,0){$2$}}
 \put(50,8){\makebox(0,0){$3$}}
 \put(70,8){\makebox(0,0){$2$}}
 \put(90,8){\makebox(0,0){$1$}}
 \put(58,-20){\makebox(0,0){$2$}}
 \put(58,-40){\makebox(0,0){$1$}}
\end{picture}
\]
The uniqueness show us that, up to conjugation, $\sigma$ is the automorphism of $\frg$ such that its restriction to $\frg_{\bar\imath}$ (notation as in Step 1) is $\omega^i$ times the identity, where $\omega$ is a fixed primitive third root of unity.

\bigskip

\noindent\textbf{Step 3:}\quad Assume that $\frg=\oplus_{0\ne \alpha\in \bZ_3^3}\frg_\alpha$ is a $\bZ_3^3$-graded simple Lie algebra of type $E_6$ with $\dim\frg_\alpha=3$ for any $0\ne\alpha\in\bZ_3^3$. The homogeneous spaces are given by the common eigenspaces of three commuting order $3$ automorphisms $\sigma_1$, $\sigma_2$, and $\sigma_3$, of $\frg$ which generate a subgroup of $\Aut\frg$ isomorphic to $\bZ_3^3$.

\smallskip

\noindent\textbf{3.1 ($\sigma_1$):}\quad Step 2 shows us that, without loss of generality, we may assume that $\sigma_1$ is the automorphism such that $\sigma_1(x)=\omega^ix$ for any  $x\in \frg_{\bar\imath}$ (notation as in Step 1).

\smallskip

\noindent\textbf{3.2 ($\sigma_2$):}\quad As for $E_8$, the restriction $\sigma_2\vert_{\frg\subo}$ is an order $3$ automorphism (otherwise the dimension of $\Fix(\sigma_1,\sigma_2)$ would be $>6$). Now, $\sigma_2$ induces a permutation of the three simple ideals of $\frg\subo$ of order $1$ or $3$, but if the order were $3$, then the eight dimensional subspace $\{x+\sigma_2(x)+\sigma_2^2(x): x\in\frsl(V_1)\}$ would be contained in the six dimensional subspace $\Fix(\sigma_1,\sigma_2)$, a contradiction. Therefore, $\sigma_2$ leaves invariant $\frsl(V_i)$ for all $i$.

Now the same arguments as for $E_8$ show that one may find elements $b_i\in SL(V_i)$, $i=1,2,3$, such that $b_i^3=1$ and
\[
\sigma_2\vert_{\frsl(V_i)}=\Ad_{b_i}\quad (i=1,2,3),\qquad \sigma_2\vert_{\frg\subuno}=b_1\otimes b_2\otimes b_3.
\]
Moreover, the minimal polynomial of $b_i\in SL(V_i)$ is exactly $X^3-1$ (its eigenvalues have multiplicity $1$).

\smallskip

\noindent\textbf{3.3 ($\sigma_3$):}\quad In the same vein, there are endomorphisms $c_i\in SL(V_i)$, $i=1,2,3$, with minimal polynomial $X^3-1$ and such that
\[
\sigma_3\vert_{\frg\frsl(V_i)}=\Ad_{c_i},\qquad
\sigma_3\vert_{\frg\subuno}=c_1\otimes c_2\otimes c_3.
\]
As for $E_8$, the commutation of $\sigma_2$ and $\sigma_3$ and a dimension count show that $b_ic_i=\mu_ic_ib_i$, with $1\ne \mu_i\in \bF$ and $\mu_i^3=1$, ($i=1,2,3$). Hence $\mu_i\in\{\omega,\omega^2\}$. Changing $\sigma_3$ by $\sigma_3^2$ if necessary, it can be assumed that $\mu_1=\omega$.

Moreover, the commutativity of $\sigma_2$ and $\sigma_3$ on $\frg\subuno$ forces the equality $\mu_1\mu_2\mu_3=1$, or $\mu_2\mu_3=\omega^2$. We conclude that $\mu_1=\mu_2=\mu_3=\omega$.
Hence, a basis can be chosen on each $V_i$ such that the coordinate matrices of $b_i$ and $c_i$ are:
\begin{equation}\label{eq:biciE6}
b_i\leftrightarrow
\begin{pmatrix} 1&0&0\\ 0&\omega&0\\ 0&0&\omega^2
\end{pmatrix},\qquad
c_i\leftrightarrow
\begin{pmatrix} 0&0&1\\ 1&0&0\\ 0&1&0
\end{pmatrix}.
\end{equation}

\bigskip
In conclusion, up to equivalence, the only $\bZ_3^3$-grading of $\frg$ such that $\dim\frg_\alpha=3$ for any $0\ne\alpha\in\bZ_3^3$ is given by the automorphisms $\sigma_1,\sigma_2,\sigma_3$ such that
\[
\begin{split}
&\sigma_1(x)=\omega^ix\quad\textrm{for any $x\in \frg_{\bar\imath}$ and $i=0,1,2$,}\\
&\sigma_2\vert_{\frg\subuno}=b_1\otimes b_2\otimes b_3,\\
&\sigma_3\vert_{\frg\subuno}=c_1\otimes c_2\otimes c_3,
\end{split}
\]
where the $\frg_{\bar\imath}$'s are the homogeneous components in \eqref{eq:E6components}, and on  fixed bases of $V_1$, $V_2$  and $V_3$, $b_i$ and $c_i$  are the endomorphisms of $V_i$ in \eqref{eq:biciE6} .
\end{proof}

\bigskip

The corresponding result for $F_4$ is the following:

\begin{theorem}\label{th:F4} Let $\bF$ be an algebraically closed field of characteristic $0$ and let $\frg$ be the simple Lie algebra of type $F_4$ over $\bF$. Then up to equivalence there is a unique $\bZ_3^3$-grading of $\frg$ such that $\dim\frg_\alpha=2$ for any $0\ne\alpha\in \bZ_3^3$.
\end{theorem}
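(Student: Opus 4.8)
The plan is to run, essentially verbatim, the three-step argument already used for $E_8$ and $E_6$, with only the changes forced by the geometry of the $F_4$ model.

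\emph{Step 1 (model).} I would start from the $\bZ_3$-graded realization of $F_4$ attached to its subalgebra of type $A_2\oplus A_2$: take $V_1$ and $V_2$ of dimension $3$ and set
\[
\frg\subo=\frsl(V_1)\oplus\frsl(V_2),\qquad \frg\subuno=V_1\otimes S^2V_2,\qquad \frg_{\bar 2}=V_1^*\otimes S^2V_2^*,
\]
with the brackets between distinct homogeneous components given by the unique-up-to-scalar $\frg\subo$-invariant maps. The dimensions are $16+18+18=52$, so this is the simple Lie algebra of type $F_4$ (this is the model in \cite{DraperModelsF4}); note that $\frg\subuno$ is an irreducible $\frg\subo$-module and generates $\frg$ as a Lie algebra. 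Next, Step 2: since $F_4$ has no outer automorphisms, Kac's classification via the affine diagram $F_4^{(1)}$ (with marks $1,2,3,4,2$) applies directly, and the order $3$ automorphisms correspond, up to conjugacy, to the node-subsets whose marks add up to $3$: the subset $\{$node with mark $3\}$, with fixed subalgebra of type $A_2\oplus A_2$ and dimension $16$, and the two subsets $\{$mark-$1$ node$,\ $mark-$2$ node$\}$, each with a $22$-dimensional fixed subalgebra. Hence, up to conjugation, there is a unique order $3$ automorphism $\sigma$ of $F_4$ with a $16$-dimensional fixed subalgebra, namely the one with $\sigma|_{\frg_{\bar\imath}}=\omega^i\,\mathrm{id}$ for the model above, $\omega$ a fixed primitive cube root of unity.

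\emph{Step 3 (uniqueness).} Let $\frg=\oplus_{0\ne\alpha}\frg_\alpha$ be as in the statement; a dimension count ($26\cdot 2=52$) gives $\frg_0=0$, and the grading is produced by three commuting order $3$ automorphisms $\sigma_1,\sigma_2,\sigma_3$ generating a copy of $\bZ_3^3$ in $\Aut\frg$. Each $\Fix(\sigma_i)$ is the sum of $8$ nonzero homogeneous components, hence has dimension $16$, so by Step 2 we may assume $\sigma_1$ is the automorphism of Step 1. For $\sigma_2$: its restriction to $\frg\subo$ has order $3$ (otherwise $\Fix(\sigma_1,\sigma_2)\supseteq\frg\subo$, which has dimension $>4=\dim(\frg_\alpha\oplus\frg_{2\alpha})$); it cannot interchange the two simple ideals of $\frg\subo$ since a permutation of a two-element set has order at most $2$; restricted to each $\frsl(V_i)$ it has order dividing $3$, hence lies in $\Int\frsl(V_i)$ (as $\Aut/\Int$ has order $2$), so $\sigma_2|_{\frsl(V_i)}=\Ad_{b_i}$; extending this to an automorphism of $\frg$, applying Schur's Lemma to the irreducible $\frg\subuno$, using that $\frg\subuno$ generates $\frg$, and rescaling, one obtains $b_i\in SL(V_i)$ with $b_i^3=1$ and $\sigma_2|_{\frg\subuno}=b_1\otimes S^2b_2$. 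Finally $\dim\Fix(\sigma_1,\sigma_2)=4$ forces each $b_i$ to have three distinct eigenvalues, necessarily $1,\omega,\omega^2$. The automorphism $\sigma_3$ is treated identically, yielding $c_i\in SL(V_i)$ with $c_i^3=1$, distinct eigenvalues, and $\sigma_3|_{\frg\subuno}=c_1\otimes S^2c_2$. Commutativity of $\sigma_2$ and $\sigma_3$ gives $b_ic_i=\mu_ic_ib_i$ with $\mu_i^3=1$; $\mu_i=1$ is impossible, for then $c_i$ would centralize $b_i$, forcing $\Fix(\sigma_1,\sigma_2,\sigma_3)\cap\frsl(V_i)\ne 0$ against $\frg_0=0$; replacing $\sigma_3$ by a suitable power we may take $\mu_1=\omega$, and commutativity on $\frg\subuno$ then forces $\mu_1\mu_2^2=1$, so $\mu_2=\omega$ as well. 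Exactly as for $E_6$, suitable bases of $V_1$ and $V_2$ make $b_i=\diag(1,\omega,\omega^2)$ and $c_i$ the cyclic shift, i.e.\ the matrices in \eqref{eq:biciE6}. This determines the grading uniquely up to equivalence.

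The argument is a near-verbatim transcription of the $E_6$ case, so I expect no real difficulty in Step 3; the points that genuinely need care are the explicit $\bZ_3$-graded model in Step 1 (with the second factor acting through $S^2V_2$) and, in Step 2, the enumeration of order $3$ automorphisms — unlike in $E_6$, here there are order $3$ automorphisms other than the one we want, and it is the value $16$ of the fixed-subalgebra dimension that singles out the correct class.
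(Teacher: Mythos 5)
Your proposal is correct and follows essentially the same route as the paper, which uses exactly this $\bZ_3$-graded model $\frsl(V_1)\oplus\frsl(V_2)\oplus\bigl(V_1\otimes S^2(V_2)\bigr)\oplus\bigl(V_1^*\otimes S^2(V_2^*)\bigr)$ (obtained there as the fixed subalgebra of the order $2$ automorphism of the $E_6$ model swapping $V_2$ and $V_3$), the uniqueness via Kac's classification of the order $3$ automorphism with $16$-dimensional fixed subalgebra, and then the same $E_6$/$E_8$-style arguments for $\sigma_2$ and $\sigma_3$. You merely spell out the details (the enumeration of order $3$ classes on $F_4^{(1)}$, the relation $\mu_1\mu_2^2=1$ coming from $S^2$, etc.) that the paper leaves as ``the same type of arguments as for $E_6$,'' and these details are accurate.
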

\begin{proof} Here we will be even more sketchy, since the situation is simpler.

Just consider the model of $E_6$ obtained above, and consider the order $2$ automorphism $\tau$ which permutes $V_2$ and $V_3$. The subalgebra $\frg$ of elements fixed by $\tau$ is a simple Lie algebra of type $F_4$ (see \cite[\S 3]{DraperModelsF4}). Therefore, $\frg$ appears as the $\bZ_3$-graded Lie algebra
\begin{equation}\label{eq:F43graded}
\frg=\frg\subo\oplus\frg\subuno\oplus\frg_{\bar 2},
\end{equation}
\begin{equation}\label{eq:F4components}
\begin{split}
\frg\subo&=\frsl(V_1)\oplus \frsl(V_2),\\
\frg\subuno&=V_1\otimes S^2(V_2),\\
\frg_{\bar 2}&=V_1^*\otimes S^2(V_2^*).
\end{split}
\end{equation}
Here $S^2(V)$ denotes the subspace of symmetric tensors in $V\otimes V$.

\bigskip

Up to conjugation in $\Aut\frg$, there is a unique order $3$ automorphism of the simple Lie algebra $\frg$ of type $F_4$ such that the dimension of the subalgebra of fixed elements is $16$. Actually, this automorphism $\sigma$ is the one that corresponds to the only node labeled by $3$ in of the affine Dynkin diagram $F_4^{(1)}$:
\[
\begin{picture}(100,35)(0,-10)
\multiput(10,0)(20,0){5}{\circle{3}}
 \put(50,0){\circle*{3}}
 \multiput(12,0)(20,0){2}{\line(1,0){16}}
 \put(72,0){\line(1,0){16}}
 \multiput(52,-1)(0,2){2}{\line(1,0){16}}
 \put(60,0){\makebox(0,0){$>$}}
 \put(10,8){\makebox(0,0){$1$}}
 \put(30,8){\makebox(0,0){$2$}}
 \put(50,8){\makebox(0,0){$3$}}
 \put(70,8){\makebox(0,0){$4$}}
 \put(90,8){\makebox(0,0){$2$}}
\end{picture}
\]
Now, the same type of arguments as for $E_6$ give the result.
\end{proof}

\bigskip

Again, the proofs of Theorems \ref{th:E6} and \ref{th:F4} give precise models of the corresponding Jordan gradings. In \cite[\S 7]{DraperMartinF4} it is shown that for $F_4$ this grading is fine. These are different from the models obtained in \cite[\S 5.3]{Eld.GrSym}, which are based on a $\bZ_3^2$-grading of the Okubo algebra over $\bF$, which is complemented by an extra order three automorphism induced by the triality automorphism associated to the Okubo algebra. For $E_6$, this unique $\bZ_3^3$-grading is not fine, as the construction of $E_6$ in terms of an Okubo algebra requires the use of another two dimensional symmetric composition algebra, which in turn can be graded over $\bZ_3$, and used to get a $\bZ_3^4$-grading on $E_6$ (see \cite{Eld.GrSym} for details).

\bigskip

%%%%%%%%%%%%%%%%%%%%%%%%%%%%%%%%%%%%%%%%%%%%%%%%%%%%%

%\bibliographystyle{amsalpha}
%\bibliography{GradSymCom}

\def\cprime{$'$}
\def\cfudot#1{{\oalign{\accent94 #1\crcr\hidewidth.\hidewidth}}}

\providecommand{\bysame}{\leavevmode\hbox to3em{\hrulefill}\thinspace}
\providecommand{\MR}{\relax\ifhmode\unskip\space\fi MR }
% \MRhref is called by the amsart/book/proc definition of \MR.
%\providecommand{\MRhref}[2]{%
%  \href{http://www.ams.org/mathscinet-getitem?mr=#1}{#2}
%}
%\providecommand{\href}[2]{#2}

\end{document}